\newtheorem{thm}{Theorem}[section]
\newtheorem*{thm*}{Theorem}
\newtheorem{lem}[thm]{Lemma}
\theoremstyle{definition}
\numberwithin{equation}{section}
\newcommand{\M}{{\mathcal M}}
\newcommand{\F}{\mathcal{F}}
\newcommand{\newabstract}[1]{%
  \par\bigskip
  \csname otherlanguage*\endcsname{#1}%
  \csname captions#1\endcsname
  \item[\hskip\labelsep\scshape\abstractname.]
}
\begin{document}

\baselineskip=17pt

\title[Large values of quadratic character sums revisited]{Large values of quadratic character sums revisited}

\author{Zikang Dong}
\author{Ruihua Wang}
\author{Weijia Wang}
\author{Hao Zhang}
\address[Zikang Dong]{School of Mathematical Sciences, Tongji University, Shanghai 200092, P. R. China}
\address[Ruihua Wang]{School of Fundamental Sciences, Hainan Bielefeld University of Applied Sciences, Danzhou 578101, P. R. China}
\address[Weijia Wang]{Morningside Center of Mathematics, Academy of Mathematics and Systems Science, Chinese Academy of Sciences, Beijing 100190, P. R. China}
\address[Hao Zhang]{School of Mathematics, Hunan University, Changsha 410082, P. R. China}
\email{zikangdong@gmail.com}
\email{ruih.wan9@gmail.com}
\email{weijiawang@amss.ac.cn}
\email{zhanghaomath@hnu.edu.cn}

\date{\today}

\begin{abstract} 
We study large values of quadratic character sums with summation lengths exceeding the square root of the modulus. 
Assuming the Generalized Riemann Hypothesis, we obtain a new Omega result.

\end{abstract}

\subjclass[2020]{Primary 11L40, 11M06.}

\maketitle

\section{Introduction}
The study of character sums has a long history in analytic number theory. In 1918,  P\'{o}lya and  Vinogradov proved $\sum_{n\leq x}\chi(n)\leq \sqrt{q}\log q$ uniformly for any non-trivial Dirichlet character $\chi ({\rm mod}\; q)$. Later in 1977,  Montgomery and
 Vaughan improved this upper bound to $\ll \sqrt{q}\log_2q$ assuming the Generalized Riemann Hypothesis (GRH) in \cite{MV77}. Here $\log_j(x)$ denotes the $j$-th iterated logarithmic function. The latter bound is optimal up to the implied constant as a uniform bound. More precisely,  Paley showed in \cite{Paley} that there exist  quadratic characters with sums that are $\gg \sqrt{q}\log_2q$. Note that this kind of Omega results are often  attained when $x$ is relatively large, with size $\asymp q$. If $x$ is smaller, only weaker results can be get.

For any $x\le q$, let $$\Delta_q(x):=\max_{\chi\neq\chi_0({\rm mod}\,q)}\Bigg|\sum_{n\le x}\chi(n)\Big|.$$
In their seminal paper \cite{GS01}, A. Granville and K. Soundararajan proved remarkable results using  probabilistic methods. They transfer the character sums to sums of Steinhaus random multiplicative functions and then study the asymptotic behaviors of character sums by analyzing the moments of sums of Steinhaus random multiplicative functions. Their results are divided into different ranges of $x$.  Most of them have been improved subsequently by Munsch \cite{Munsch}, Hough \cite{Hough} and La Bret\`eche and Tenenbaum \cite{BT}. 

 In \cite{Munsch}, Munsch showed  when $\log q\leq x\leq \exp(\sqrt{\log q})$,
\begin{equation*}
    \Delta_q(x)\ge \Psi\bigg(x,\big(\tfrac14+o(1)\big)\frac{\log q\log_2q}{\max\{\log_2x-\log_3q,\log_3q\}}\bigg).
\end{equation*}
 Hough proved in \cite{Hough} that when $\exp(4\sqrt{\log q\log_2q}\log_3q)\le x\le\exp((\log q)^{\frac12+\delta})$, we have 
\[
\Delta_q(x)\ge \sqrt{x}\exp\bigg((1+o(1))\sqrt{\frac{\log q}{\log_2 q}}\bigg).
\]
 La Bret\`eche and  Tenenbaum showed in \cite{BT} that when $\exp((\log q)^{\frac12+\delta})\le x\le q^{\frac12}$, we have 
\[\Delta_q(x)\ge \sqrt x\exp\bigg((\sqrt2+o(1))\sqrt{\frac{\log( q/x)\log_3(q/x)}{\log_2(q/x)}}\bigg).\]

These results were generalized to large zeta sums by part of the authors in \cite{DWZ}, and to quadratic character sums by Dong and Zhang \cite{DZ}. When the summation length is $>\sqrt q$,   Hough also showed a dual result for $\exp(4\sqrt{\log q\log_2q}\log_3q)\le x\le\exp((\log q)^{\frac12+\delta})$. That is, the Omega result for $\sum_{n\le q/x}\chi(n)$.  A generalization to the quadratic character sum was also made in \cite{DSWZ}. In this paper, we prove a dual result of the above result of  La Bret\`eche and  Tenenbaum  \cite{BT} for the quadratic character sums.

 Denote by $\F$ the set of all fundamental discriminants $d$, and let $X$ be large. 

\begin{thm}\label{thm1.1}

Assume GRH. Let $X$ be large and $\exp((\log X)^{\frac12+\varepsilon})<x<X^{\frac12}$. We have
$$\max_{X<|d|\le2X\atop d\in\F}\sum_{n\le |d|/x}\chi_d(n)\ge\sqrt{\frac Xx}\exp\bigg((1+o(1))\sqrt{\frac{\log (\sqrt {X}/x)\log_3(\sqrt{X}/x)}{\log_2(\sqrt X/x)}}\bigg).$$ 
\end{thm}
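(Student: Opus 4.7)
The starting point is the P\'olya--Vinogradov Fourier expansion for the real primitive character $\chi_d$:
\[
\sum_{n\le |d|/x}\chi_d(n) \;=\; \frac{\sqrt{|d|}}{\pi}\sum_{1\le n\le H}\frac{\chi_d(n)\sin(2\pi n/x)}{n} \;+\; O\!\left(\frac{|d|\log|d|}{H}\right).
\]
Taking $H\asymp \sqrt{Xx}(\log X)^{C}$, the error is negligible compared to the target $\sqrt{X/x}\exp(\cdots)$, with GRH being used to sharpen such truncation errors where needed. It therefore suffices to produce $d\in\F$ with $X<|d|\le 2X$ such that
\[
\mathcal D(\chi_d):=\sum_{n\le H}\frac{\chi_d(n)\sin(2\pi n/x)}{n}\;\gg\;\frac{1}{\sqrt x}\exp\!\big((1+o(1))V\big),\quad V:=\sqrt{\frac{\log(\sqrt X/x)\log_3(\sqrt X/x)}{\log_2(\sqrt X/x)}}.
\]

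The main device is a Soundararajan-type resonator over fundamental discriminants, in the spirit of \cite{DZ}. Let $r$ be the multiplicative function supported on squarefree integers whose prime factors lie in a window $\mathcal P\subset [x(\log X)^A,\sqrt X]$, with $r(p)=\lambda/(\sqrt p\log p)$ for $p\in\mathcal P$ and $\lambda\asymp\sqrt{\log(\sqrt X/x)\log_2(\sqrt X/x)}$, the endpoints and $\lambda$ to be optimized. Form $R(d):=\sum_m r(m)\chi_d(m)$ and the two moments
\[
M_1:=\sideset{}{^\flat}\sum_{\substack{X<|d|\le 2X\\ d\in\F}}R(d)^2\,\mathcal D(\chi_d),\qquad M_2:=\sideset{}{^\flat}\sum_{\substack{X<|d|\le 2X\\ d\in\F}}R(d)^2.
\]
Since $R(d)^2\ge 0$, one has $\max_d\mathcal D(\chi_d)\ge M_1/M_2$, and it remains to estimate this ratio.

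Both moments are handled using the standard average $\sideset{}{^\flat}\sum_{X<|d|\le 2X,\,d\in\F}\chi_d(a)\sim cX\cdot\mathbf{1}[a\text{ is a square}]$ (with explicit local corrections at primes dividing $2a$). Expanding $R(d)^2=\sum_{m_1,m_2}r(m_1)r(m_2)\chi_d(m_1m_2)$ and applying this average yields $M_2\sim cX\prod_{p\in\mathcal P}(1+r(p)^2)$ and
\[
\frac{M_1}{M_2}=\sum_{\substack{n\le H\\ n\text{ supp. on }\mathcal P}}\frac{\sin(2\pi n/x)}{n}\prod_{p\mid n}\frac{2r(p)}{1+r(p)^2}+o(1).
\]
The dominant contribution comes from squarefree $n$ where $\sin(2\pi n/x)$ is bounded away from zero and whose number of prime factors in $\mathcal P$ matches the optimal density. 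Optimizing $\mathcal P$ and $\lambda$ as in the analysis of \cite{BT,DZ} produces the exponent $(1+o(1))V$: the logarithms take argument $\sqrt X/x$ rather than $X/x$ because P\'olya contributes a $\sqrt{|d|}$ factor, making the effective modulus $\sqrt X$, and the constant $1$ reflects the Rademacher statistics of quadratic averaging.

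\textbf{The main obstacle} is the precise evaluation of $M_1$ with the oscillating weight $\sin(2\pi n/x)/n$ in place of the plain indicator $\mathbf{1}[n\le x]$ used in the direct setting of \cite{DZ}. The sign-changing of $\sin$ requires isolating a sub-range of $n$ on which $\sin(2\pi n/x)$ has fixed sign and is of order $1$, and then controlling the tail contributions (via a Rankin-type bound or an averaging argument) so that they neither cancel the main term nor degrade the constant $1$. A secondary technical point is the balancing of the P\'olya truncation length $H$: it must be chosen large enough (using GRH) for the truncation error to be negligible, yet small enough that contributions from $n>\sqrt X$ in $\mathcal D(\chi_d)$ remain subordinate to the resonator's dominant range.
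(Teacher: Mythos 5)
Your overall frame (P\'olya expansion, a resonator, quadratic averaging over fundamental discriminants under GRH) is the right one, but two of your concrete choices break the argument. First, the resonator $r(p)=\lambda/(\sqrt p\log p)$ on a prime window is the Soundararajan--Hough family, and optimizing $\lambda$ and the window within that family can only produce growth of the shape $\exp(c\sqrt{\log N/\log_2N})$; it cannot generate the extra $\log_3$ factor in the stated exponent. The $\sqrt{\log N\log_3N/\log_2N}$ growth comes specifically from the G\'al-sum extremal sets of de la Bret\`eche--Tenenbaum, and that is what the paper uses: its resonator is the plain sum $R_d=\sum_{m\in\M}\chi_d(m)$ over a set $\M$ of $N=\lfloor X^{1/2-\delta}/x\rfloor$ squarefree integers built from primes of size only $(\log N)^{1+o(1)}$, chosen so that $\sum_{m,n\in\M}\sqrt{(m,n)/[m,n]}=N\exp\big((2+o(1))\sqrt{\log N\log_3N/\log_2N}\big)$. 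Note in particular that the resonator primes must be tiny --- essentially the opposite of your window $[x(\log X)^A,\sqrt X]$.

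Second, the sign problem you flag as ``the main obstacle'' is fatal in your setup, and the proposed fix does not repair it. With your window every $n>1$ in the resonator's support exceeds $x$, so $\sin(2\pi n/x)$ equidistributes in sign over that support and the ratio $M_1/M_2=\sum_n\frac{\sin(2\pi n/x)}{n}\prod_{p\mid n}\frac{2r(p)}{1+r(p)^2}$ has no reason to be large; you cannot ``isolate a sub-range of $n$ of fixed sign,'' because the $n$-sum is dictated by the resonator's support and a multiplicative weight cannot correlate with the additive condition $\{n/x\}\in I$. The paper sidesteps this entirely: it works with odd $\chi_d$, for which the P\'olya kernel is $1-\cos(2\pi k/x)\ge0$ rather than $\sin(2\pi k/x)$, and it uses the second moment $\sum_dR_d^2C_d(z)^2$ rather than your first moment. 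After averaging over $d$, the surviving terms (those with $mnk\ell=\square$) are all non-negative, so one may discard everything except $k,\ell\le x/2$, where $1-\cos(2\pi k/x)\asymp(k/x)^2$; the square condition then forces $mk=n\ell$, and summing $k\ell$ over that diagonal yields $\gg x^3\sqrt{(m,n)/[m,n]}$ per pair $(m,n)$, whence $\max_dC_d(z)^2\gg x^{-1}\exp\big((2+o(1))\sqrt{\cdots}\big)$ and the theorem follows upon multiplying by $\sqrt{|d|}/(2\pi)$. Unless you replace both the resonator family and the first-moment/sine-kernel setup, the plan does not reach the stated bound.
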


Our proof is based on  results in \cite{BT} and \cite{DM}. The main method  used is called the resonance method, which was highly developed by Soundararajan \cite{Sound08}.


   \section{Preliminary Lemmas}\label{sec2}
    The following Fourier expansion for character sums was first showed by P\'olya.
 \begin{lem}\label{lem2.1}
 Let $\chi({\rm mod}\;q)$ be any primitive character and $0<\alpha<1$. Then we have
$$\sum_{n\le \alpha q}\chi(n)=\frac{\tau(\chi)}{2\pi i}\sum_{1\le|m|\le z}\frac{\overline\chi(m)}{m}(1-e(-\alpha m))+O(1+q\log q/z),$$
where $\tau(\chi):=\sum_{n\le q}\chi(n)e(n/q)$ is the Gauss sum and $e(a):=e^{2\pi ia}$.
\end{lem}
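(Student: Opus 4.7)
The plan is to derive the stated expansion by applying the Fourier series of the indicator function $\mathbf{1}_{(0,\alpha)}$ on $\mbr/\mbz$ and pairing it with the Gauss-sum identity for the primitive character $\chi$.

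First, I would compute the Fourier coefficients of $f(x) := \mathbf{1}_{(0,\alpha)}(x)$. For $m\ne 0$ a direct integration gives
$$c_m=\int_0^\alpha e(-mx)\,\mmd x=\frac{1-e(-m\alpha)}{2\pi i m},$$
while $c_0=\alpha$. Denote the symmetric truncated series by
$$S_z(x):=\alpha+\sum_{0<|m|\le z}c_m\,e(mx).$$
The standard pointwise bound for Fourier partial sums of a jump function (obtained by writing $S_z-f$ as a convolution against the Dirichlet kernel and applying Abel summation) yields, for $x\in\mbr/\mbz$ away from the jumps $\{0,\alpha\}$,
$$\bigl|S_z(x)-f(x)\bigr|\ll\frac{1}{1+z\lVert x\rVert}+\frac{1}{1+z\lVert x-\alpha\rVert},$$
where $\lVert\cdot\rVert$ denotes distance to the nearest integer.

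Second, I would evaluate the auxiliary sum $T:=\sum_{n=1}^{q-1}\chi(n)\,S_z(n/q)$ in two different ways. On the spectral side, expanding $S_z$ and swapping the order of summation gives
$$T=\alpha\sum_{n\bmod q}\chi(n)+\sum_{0<|m|\le z}c_m\sum_{n\bmod q}\chi(n)\,e(mn/q).$$
The first sum vanishes since $\chi$ is non-principal. For the inner sum, primitivity of $\chi$ yields the Gauss-sum identity $\sum_{n\bmod q}\chi(n)e(mn/q)=\ol\chi(m)\,\tau(\chi)$ for every integer $m$, with $\ol\chi(m)=0$ when $(m,q)>1$. Substituting produces
$$T=\frac{\tau(\chi)}{2\pi i}\sum_{1\le|m|\le z}\frac{\ol\chi(m)}{m}\bigl(1-e(-m\alpha)\bigr),$$
which is exactly the proposed main term.

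Third, on the physical side, the pointwise approximation $S_z(n/q)=f(n/q)+\text{(error)}$ gives
$$T=\sum_{0<n<\alpha q}\chi(n)+O\bigg(\sum_{n=1}^{q-1}\Big(\frac{1}{1+z\lVert n/q\rVert}+\frac{1}{1+z\lVert n/q-\alpha\rVert}\Big)\bigg)+O(1),$$
where the trailing $O(1)$ absorbs the contribution of any $n$ with $n/q\in\{0,\alpha\}\pmod 1$ and the possible boundary term distinguishing $\sum_{n\le\alpha q}$ from $\sum_{n<\alpha q}$. Because $\lVert n/q\rVert\ge\min(n,q-n)/q$, the first error sum is bounded by $(q/z)\sum_{1\le k\le q/2}k^{-1}\ll q\log q/z$, and the second sum admits the same bound after shifting the variable. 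Equating the two evaluations of $T$ yields the claimed formula.

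The main technical obstacle is the pointwise truncation bound for $S_z-f$ in the first step: because $f$ has jump discontinuities, the Fourier series converges only conditionally, and one must extract quantitative decay as $n/q$ moves away from the jumps. This step is classical but requires careful use of Abel summation against the Dirichlet kernel to harness its $O(1/\lVert t\rVert)$ behavior; every other step reduces to orthogonality of additive characters together with a harmless logarithmic harmonic sum.
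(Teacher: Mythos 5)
Your argument is correct: the paper itself only cites \cite[p.~311, Eq.~(9.19)]{MVbook} for this lemma, and your proof reconstructs essentially the argument given there --- expand the truncated Fourier series of $\mathbf{1}_{(0,\alpha)}$ (equivalently, of two shifted sawtooth functions), use separability of the Gauss sum $\sum_{n \bmod q}\chi(n)e(mn/q)=\overline{\chi}(m)\tau(\chi)$ valid for all $m$ by primitivity, and control the truncation error $\ll (1+z\lVert x\rVert)^{-1}+(1+z\lVert x-\alpha\rVert)^{-1}$ summed over the $1/q$-spaced points $n/q$ to get $O(1+q\log q/z)$. All steps check out, including the $O(1)$ absorption of the jump points and the boundary term.
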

\begin{proof}
This is \cite[p.311, Eq. (9.19)]{MVbook}.
\end{proof}
The following conditional estimate for characters has a good error term in use.
    \begin{lem}\label{lem2.2}
	Assuming GRH. Let $n=n_0n_1^2$ be a positive integer with $n_0$ the square-free part of $n$.
	 Then for any $\varepsilon>0$, we obtain
	\begin{align*}
	\sum_{|d|\le X\atop d\in\F} \chi_{d}(n)=\frac{X}{\zeta(2)}\prod_{p|n}\frac{p}{p+1}{1}_{n=\square}+ O\left(X^{\frac12+\varepsilon}f(n_0)g(n_1)\right),
	\end{align*}
	where   ${{1}}_{n=\square}$ indicates the indicator function of the square numbers, and
    $$f(n_0)=\exp((\log n_0)^{1-\varepsilon}),\;\;\;\;g(n_1)=\sum_{d|n_1}\frac{\mu(d)^2}{d^{\frac12+\varepsilon}}.$$
\end{lem}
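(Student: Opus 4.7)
The plan is to separate the square part of $n$, express the resulting sum over fundamental discriminants as a contour integral, and shift the contour to $\mathrm{Re}(s)=\tfrac12+\varepsilon$ using GRH. Since $\chi_d$ is a real completely multiplicative character, $\chi_d(n)=\chi_d(n_0)\chi_d(n_1)^2=\chi_d(n_0)\mathbf{1}_{\gcd(d,n_1)=1}$, so it suffices to analyze
\[
S(X):=\sum_{\substack{|d|\le X,\,d\in\F\\\gcd(d,n_1)=1}}\chi_d(n_0).
\]

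\textbf{Case $n_0=1$ (main term).} Here $\chi_d(n_0)\equiv 1$ on the range of summation, so $S(X)$ just counts fundamental discriminants $|d|\le X$ coprime to $n_1$. Using the explicit description of $\F$ (namely squarefree $d\equiv 1\bmod 4$ together with $4m$ for squarefree $m\equiv 2,3\bmod 4$, and analogously for $d<0$) and a two-stage M\"obius inversion---one pass to enforce squarefreeness, one pass to enforce $(d,n_1)=1$---direct computation yields
\[
S(X)=\frac{X}{\zeta(2)}\prod_{p\mid n_1}\frac{p}{p+1}+O\bigl(X^{1/2+\varepsilon}g(n_1)\bigr),
\]
where $g(n_1)$ arises naturally as the M\"obius tail $\sum_{e\mid n_1}\mu(e)^2/e^{1/2+\varepsilon}$ and the $X^{1/2+\varepsilon}$ error is inherited from the standard zero-free region of $\zeta(s)$. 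Since $\prod_{p\mid n}\frac{p}{p+1}=\prod_{p\mid n_1}\frac{p}{p+1}$ when $n_0=1$, this matches the claimed main term.

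\textbf{Case $n_0>1$ (pure error).} Viewing $d\mapsto\chi_d(n_0)$ as a function on integers coprime to $n_0$, quadratic reciprocity (prime-by-prime, since $n_0$ is squarefree and nontrivial) expresses it, up to an explicit $8$-periodic sign, as $\chi_{n_0^*}(d)$, where $n_0^*$ is the fundamental discriminant of $\mathbb{Q}(\sqrt{n_0})$. Consequently the Dirichlet series
\[
Z(s):=\sum_{\substack{d\in\F\\\gcd(d,n_1)=1}}\chi_d(n_0)\,|d|^{-s}
\]
factors as $L(s,\chi_{n_0^*})$ times an explicit Euler correction at $2$ and at primes $p\mid n_1$ (and an $L(2s,\chi_0)^{-1}$-type factor from restricting to squarefree conductors); under GRH, $Z(s)$ is holomorphic in $\mathrm{Re}(s)>\tfrac12$. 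A truncated Perron formula followed by a contour shift from $\mathrm{Re}(s)=1+1/\log X$ to $\mathrm{Re}(s)=\tfrac12+\varepsilon$ picks up no residues, so $S(X)$ is bounded by $X^{1/2+\varepsilon}$ times the sup of $|Z(s)|$ on the shifted line.

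\textbf{Main obstacle.} The decisive input is the conditional bound $|L(\tfrac12+\varepsilon+it,\chi_{n_0^*})|\ll_\varepsilon\exp\bigl((\log n_0)^{1-\varepsilon}\bigr)(1+|t|)^{\varepsilon}$, which is sharper than Lindel\"of and requires the Chandee--Soundararajan style GRH estimate for $\log|L|$ slightly off the critical line; this is the step for which \cite{DM} is invoked and which produces exactly the factor $f(n_0)=\exp((\log n_0)^{1-\varepsilon})$. The Euler correction at primes $p\mid n_1$ then multiplies out to $\prod_{p\mid n_1}(1+p^{-1/2-\varepsilon})=g(n_1)$, yielding the asserted error term.
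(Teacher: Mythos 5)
The paper does not prove this lemma at all: its ``proof'' is the single line ``This follows directly from Lemma 1 of \cite{DM}'', so there is no in-paper argument to compare against. Your sketch reconstructs the standard proof of such estimates and its architecture is sound: the splitting $\chi_d(n)=\chi_d(n_0)\mathbf{1}_{(d,n_1)=1}$, the elementary count of fundamental discriminants coprime to $n_1$ for the main term (which is where $g(n_1)=\prod_{p\mid n_1}(1+p^{-1/2-\varepsilon})$ genuinely originates), and, for $n_0>1$, the conversion of $d\mapsto\chi_d(n_0)$ into a bounded combination of non-principal real characters of conductor $O(n_0)$ followed by Perron and a shift to $\mathrm{Re}(s)=\tfrac12+\varepsilon$. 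You also correctly identify the one genuinely conditional input: the shape $f(n_0)=\exp((\log n_0)^{1-\varepsilon})$ comes from the GRH bound $\log|L(\sigma+it,\chi)|\ll_\varepsilon(\log q(2+|t|))^{2-2\sigma}$ at $\sigma=\tfrac12+\varepsilon$, which is sub-Lindel\"of in the conductor and is not recoverable from the Montgomery--Vaughan uniform character-sum bound alone. Two small imprecisions, neither fatal: the holomorphy of $Z(s)$ in $\mathrm{Re}(s)>\tfrac12$ is unconditional (the $1/\zeta(2s)$-type factor is evaluated at $\mathrm{Re}(2s)>1$, and $L(s,\chi_{n_0^*})$ is entire); GRH enters only through the sup bound on the shifted line. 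And the $O(X^{1/2+\varepsilon}g(n_1))$ error in the $n_0=1$ case comes from the elementary M\"obius count of squarefree integers, not from any zero-free region of $\zeta$. Since the paper outsources the entire statement to \cite{DM}, your write-up is, if anything, more informative than the paper's, though to stand on its own it would need the Perron truncation and the Euler-factor bookkeeping at $p=2$ and $p\mid n_1$ carried out explicitly.
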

\begin{proof}
    This follows directly from Lemma 1 of \cite{DM}.
\end{proof}
On the one hand, it is clear that 
$$f(n_0)\le n_0^\varepsilon\le n^\varepsilon,\;\;\;\;g(n_1)\le n_1^\varepsilon\le n^\varepsilon.$$
On the other hand, if we denote the largest prime factor of $n$ by $P_+(n)$, then $n_0,n_1\le \prod_{p\le P_+(n)}p$.
So easily we have
$$f(n_0)\le\exp\big(P_+(n)^{1-\varepsilon}\big),\;\;\;\;\;g(n_1)\le\exp\big(P_+(n)^{\frac12-\varepsilon}\big).$$

\begin{lem}\label{GCD}
    Let $\M$ be any set of positive squarefree integers with $|\M|=N$. Then as $N\to\infty$, we have
    $$\max_{|\M|=N}\sum_{m,n\in\M}\sqrt{\frac{(m,n)}{[m,n]}}=N\exp\bigg((2+o(1))\sqrt{\frac{\log N\log_3N}{\log_2N}}\bigg).$$
\end{lem}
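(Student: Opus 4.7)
The plan is to recognize the sum as the classical G\'al-type GCD sum at the critical exponent $\alpha=\tfrac12$ and then combine an upper bound via a dyadic decomposition with a matching lower bound from an explicit product-of-primes construction, both essentially in the line of La Bret\`eche--Tenenbaum \cite{BT}. For the initial reduction, since $m,n\in\M$ are squarefree we have $(m,n)[m,n]=mn$, so $\sqrt{(m,n)/[m,n]}=(m,n)/\sqrt{mn}$; hence the quantity in question equals the G\'al sum $\Gamma(\M):=\sum_{m,n\in\M}(m,n)/\sqrt{mn}$. Writing each squarefree $m$ as $\prod_{p\in S_m}p$, the summand factors as $\prod_{p\in S_m\triangle S_n}p^{-1/2}$, which makes $\Gamma$ multiplicative in the symmetric difference of prime supports.

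For the upper bound
\[\Gamma(\M)\le N\exp\bigl((2+o(1))\sqrt{\log N\log_3N/\log_2N}\bigr),\]
the strategy is to decompose the prime support of elements of $\M$ dyadically and apply an $L^2$/Rankin-type inequality block by block. The primes above $\exp\bigl((\log N)^{1/2+o(1)}\bigr)$ contribute negligibly since the weight $p^{-1/2}$ is too small; the bulk comes from primes in a narrow window just below this threshold, and summing the block contributions with an optimized cut-off produces the constant $2$. This argument is the squarefree analogue of the Bondarenko--Seip bound for G\'al sums at the critical exponent, as sharpened in \cite{BT}.

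For the lower bound, I would exhibit a matching maximizer explicitly: take $\M$ to be a collection of $N$ products of $k$ distinct primes drawn from a prime interval $(y,Y]$, with parameters chosen so that $k\asymp\sqrt{\log N\log_2N/\log_3N}$ and $\sum_{y<p\le Y}p^{-1/2}$ is comparable to $k$. Splitting the double sum over $\M\times\M$ according to $j:=|S_m\cap S_n|$ and applying Mertens' theorem reduces the computation to a binomial-type sum in $j$. A saddle-point evaluation of this sum, dominated by $j$ near an optimal value, then delivers the desired lower bound.

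The main obstacle is pinning down the sharp constant $2$ in the exponent. On the upper-bound side this requires a careful balance in the dyadic argument and a tight handling of the tail; on the lower-bound side it requires an optimal choice of $(k,y,Y)$ and a careful saddle-point estimate of the resulting binomial sum. Both ingredients are essentially present in \cite{BT}, so the lemma can in fact be cited directly from that work.
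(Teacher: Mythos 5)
Your proposal is correct and ends exactly where the paper does: the identity $(m,n)[m,n]=mn$ converts the sum into the G\'al sum at exponent $\tfrac12$, whose maximal order is Eq.\ (1.5) of \cite{BT}, which the paper simply cites. One caution about your sketch: the extremal sets in \cite{BT} have all prime factors of size at most $(\log N)^{1+o(1)}$ (a fact the paper relies on later), not primes near $\exp((\log N)^{1/2+o(1)})$ as your dyadic heuristic suggests, but this does not affect the validity of citing the result.
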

\begin{proof}
    This is Eq. (1.5) of \cite{BT}.
\end{proof}
Note that in the proof of the above lemma, the choice for the set $\M$ satisfies $y_\M:=\max_{m\in\M}P_+(m)\le (\log N)^{1+o(1)}$.

\section{Proof of Theorem \ref{thm1.1}}\label{sec4}
Choose $z=\sqrt{|d|x}\log |d|$, by Lemma \ref{lem2.1} we have
\begin{align*}&\sum_{n\le |d|/x}\chi_d(n)\\&=\frac{\tau(\chi_d)}{2\pi i}\sum_{1\le|m|\le z}\frac{\chi_d(m)}{m}\big(1-e(-m/x)\big)+O(\sqrt{|d|/x})\\
&=\frac{\tau(\chi_d)}{2\pi i}\sum_{1\le|m|\le z}\frac{\chi_d(m)}{m}\big(1-c(m/x)\big)+\frac{\tau(\chi_d)}{2\pi }\sum_{1\le|m|\le z}\frac{\chi_d(m)}{m}s(m/x)+O(\sqrt{|d|/x}),\end{align*}
where
$$e(a):=e^{2\pi ia},\;\;c(a):=\cos2\pi a,\;\;s(a):=\sin 2\pi a.$$
Let $$C_d(z):=\sum_{|m|\le z}\frac{\chi_d(m)}{m}\big(1-c(m/x)\big),$$
and
$$S_d(z):=\sum_{|m|\le z}\frac{\chi_d(m)}{m}s(m/x).$$
If $\chi_d(-1)=1$, then
$C_d(z)=0$.
If $\chi_d(-1)=-1$,  then $S_d(z)=0$, and
$$\Big|\sum_{n\le |d|/x}\chi_d(n)\Big|=\frac{\sqrt {|d|}}{2\pi}|C_d(z)|+O(\sqrt{|d|/x}).$$
Thus we have 
\begin{align}\label{polya}
\max_{X<|d|\le 2X\atop d\in\F}\Big|\sum_{n\le |d|/x}\chi_d(n)\Big|\ge\max_{X<|d|\le 2X\atop d\in\F}\frac{\sqrt {|d|}}{2\pi}|C_d(z)|+O(\sqrt{|d|/x}).\end{align}

  Let $N=\lfloor X^{\frac12-\delta}/x\rfloor$, and $\M$ be a set such that Lemma \ref{GCD} holds.  We define the resonator $$R_d:=\sum_{m\in \M}\chi_d(n),$$and 
$$M_1(R,X):=\sum_{X<|d|\le 2X\atop d\in\F}R_d^2,$$
$$M_2(R,X):=\sum_{X<|d|\le 2X\atop d\in\F}R_d^2C_d(z)^2.$$
Then
\begin{align*}\max_{X<|d|\le 2X\atop d\in\F}C_d(z)^2\ge\frac{M_2(R,X)}{M_1(R,X)}.\end{align*}
For $M_1(R,X)$, it holds that 
\begin{align*}M_1(R,X)&=\frac{X}{\zeta(2)}\sum_{m,n\in\M\atop mn=\square}\prod_{p|mn}\frac{p}{p+1}+O\Big(X^{\frac12+\varepsilon}\sum_{m,n\in\M}1\Big)\\
&=\frac{X}{\zeta(2)}\sum_{m\in\M}\prod_{p|m}\frac{p}{p+1}+O\big(X^{\frac12+\varepsilon}N^2\big)\\
&\le\frac{X}{\zeta(2)}N+O\big(X^{\frac12+\varepsilon}N^2\big)\\
&\ll{X}N.\end{align*}
For $M_2(R,X)$, we have
\begin{align*}
&M_2(R,X)\\&=\frac{X}{\zeta(2)}\sum_{m,n\in\M}\sum_{1\le|k|,|\ell|\le z\atop mnk\ell=\square}\frac{(1-c(k/x))(1-c(\ell/x))}{k\ell}\prod_{p|mnk\ell}\frac{p}{p+1}+O\Big(X^{\frac12+\varepsilon}\sum_{m,n\in\M}\sum_{k,\ell\le x}1\Big)\nonumber\\
&=\frac{2X}{\zeta(2)}\sum_{m,n\in\M}\sum_{k,\ell\le z\atop mnk\ell=\square}\frac{4s(k/2x)^2s(\ell/2x)^2}{k\ell}\prod_{p|mnk\ell}\frac{p}{p+1}+O\big(X^{\frac12+\varepsilon}N^2x^2\big)\\
&\ge\frac{8X}{\zeta(2)}\sum_{m,n\in\M}\sum_{k,\ell\le x/2\atop mnk\ell=\square}\frac{s(k/2x)^2s(\ell/2x)^2}{k\ell}\prod_{p|mnk\ell}\frac{p}{p+1}+O\big(X^{\frac12+\varepsilon}N^2x^2\big)\\
&\ge\frac{8X}{\zeta(2)}\Big(\frac2\pi\Big)^4\sum_{m,n\in\M}\sum_{k,\ell\le x/2\atop mnk\ell=\square}\frac{(2\pi k/2x)^2(2\pi \ell/2x)^2}{k\ell}\prod_{p|mnk\ell}\frac{p}{p+1}+O\big(X^{\frac12+\varepsilon}N^2x^2\big)\\
&\gg\frac{X}{x^4}\sum_{m,n\in\M}\sum_{k,\ell\le x/2\atop mnk\ell=\square}{k\ell}\prod_{p|mnk\ell}\frac{p}{p+1}+O\big(X^{\frac12+\varepsilon}N^2x^2\big).
\end{align*}
So
\begin{equation}\frac{M_2(R,X)}{M_1(R,X)}\gg\frac{1}{Nx^4}\sum_{m,n\in\M}\sum_{k,\ell\le x/2\atop mnk\ell=\square}k\ell\prod_{p|mnk\ell}\frac{p}{p+1}+O(X^{-\delta+\varepsilon}x).\label{mainterm}\end{equation}
Let
$$I_2(R,X):=\sum_{m,n\in\M}\sum_{k,\ell\le x/2\atop mnk\ell=\square}k\ell\prod_{p|mnk\ell}\frac{p}{p+1}.$$
 We have 
$$I_2(R,X)
\ge\prod_{p\le X}\frac{p}{p+1}\sum_{m,n\in\M}\sum_{k,l\le x/2 \atop mk=n\ell}k\ell
\ge(\log X)^{-c}\sum_{m,n\in\M}\sum_{k,l\le x/2 \atop mk=n\ell}k\ell,$$
for some $c>0$.
For fixed $m,n$, $mk=n\ell$ implies $k=nL/(m,n)$ and $\ell=mL/(m,n)$ for some integer $L$. Since $\max\M\le2\min\M$, we have for the inner sum
\begin{align*}\sum_{k,\ell\le x/2\atop mk=n\ell}k\ell&=\sum_{L\le\max\{\frac{m}{(m,n)},\frac{n}{(m,n)}\}}\frac{mn}{(m,n)^2}L^2\\&\gg\frac{[m,n]}{(m,n)}\bigg(\frac{x/2}{\max\{\frac{m}{(m,n)},\frac{n}{(m,n)}\}}\bigg)^3\\&\ge\frac{[m,n]}{(m,n)}\bigg(\frac{x/2}{\sqrt{2\frac{m}{(m,n)}\frac{n}{(m,n)}}}\bigg)^3\\&\gg x^3\sqrt{\frac{(m,n)}{[m,n]}}
.\end{align*}
It follows that
$$I_2(R,X)\gg x^3(\log X)^{-c}\sum_{m,n\in\M\atop[m,n]/(m,n)\le x^2/8}\sqrt{\frac{(m,n)}{[m,n]}}.$$
We have by Lemma \ref{GCD}
\begin{align*}
&\sum_{m,n\in\M\atop[m,n]/(m,n)\le x^2/8}\sqrt{\frac{(m,n)}{[m,n]}}\\
&=\bigg(\sum_{m,n\in\M
}\sqrt{\frac{(m,n)}{[m,n]}}-\sum_{m,n\in\M
\atop [m,n]/(m,n)> x^2/8}\sqrt{\frac{(m,n)}{[m,n]}}\bigg)\\
&\gg|\M|\exp\bigg((2+o(1))\sqrt{\frac{\log(X^{\frac12-\delta}/x)\log_3(X^{\frac12-\delta}/x)}{\log_2(X^{\frac12-\delta}/x)}}\bigg).
\end{align*}
Here in the last step we used
\begin{align*}\sum_{m,n\in\M
\atop [m,n]/(m,n)> x^2/8}\sqrt{\frac{(m,n)}{[m,n]}}&\ll x^{-2\eta}\sum_{m,n\in\M
}\bigg({\frac{(m,n)}{[m,n]}}\bigg)^{\frac12-\eta}\\
&\ll x^{-2\eta}\prod_{p\le y_\M}\bigg(1+\frac{2}{p^{\frac12-\eta}-1}\bigg)\\
&\ll x^{-2\eta}\exp\big( y_\M^{\frac12+\eta}\big)\\
&\ll x^{-2\eta}\exp\big( (\log (X^{\frac12-\delta}/x))^{\frac12+\eta+o(1)}\big)\\
&\ll \exp\big(-\tfrac23\delta(\log X)^{\frac12+\delta}\big)\exp\big((\log X)^{\frac12+\frac{2}{3}\delta}\big)\\
&\ll1.
\end{align*}
with $\eta=\delta/3$, $y_\M=\max_{m\in\M} P_+(m)\le(\log (X^{\frac12-\delta}/x))^{1+o(1)}$ and $x>\exp((\log X)^{\frac12+\delta}).$
Inserting into \eqref{mainterm}, we have 
\begin{align*}
\max_{X<|d|\le 2X\atop d\in\F}C_d(z)^2&\gg \frac x {N}(\log X)^{-c}\sum_{m,n\in\M\atop[m,n]/(m,n)\le x^2/2}\sqrt{\frac{(m,n)}{[m,n]}}+O(X^{-\delta+\varepsilon}x)\\
&\gg  x{(\log X)^{-c}}\exp\bigg((2+o(1))\sqrt{\frac{\log N\log_3N}{\log_2N}}\bigg)\\
&\ge x\exp\bigg((2+o(1))\sqrt{\frac{\log (X^{\frac12-\delta}/x)\log_3(X^{\frac12-\delta}/x)}{\log_2(X^{\frac12-\delta}/x)}}\bigg),
\end{align*}
where we have used Lemma \ref{GCD}.

\section*{Acknowledgements}
Z. Dong is supported by the Shanghai Magnolia Talent Plan Pujiang Project (Grant No. 24PJD140) and the National
	Natural Science Foundation of China (Grant No. 	1240011770). W. Wang is supported by the National
	Natural Science Foundation of China (Grant No. 1250012812). H. Zhang is supported by the Fundamental Research Funds for the Central Universities (Grant No. 531118010622), the National
	Natural Science Foundation of China (Grant No. 1240011979) and the Hunan Provincial Natural Science Foundation of China (Grant No. 2024JJ6120).

\normalem

\end{document}